\begin{document}

\title{ A note on the plane Jacobian conjecture}
\author{Nguyen Van Chau}
\address{Institute of Mathematics, 18 Hoang Quoc Viet, 10307 Hanoi,Vietnam}
\email{nvchau@math.ac.vn}
\thanks{This research is supposed by NAFOSTED, Vietnam}
\date{}
\maketitle

\def\C{\mathbb{C}}
\def\P{\mathbb{P}}
\def\DD{\mathcal{D}}

\newtheorem{fact}{Fact}
\begin{abstract}
It is shown that every polynomial function
$P:\mathbb{C}^2\longrightarrow\mathbb{C}$ with irreducible fibres
of same a genus is a coordinate. In consequence, there does not
exist counterexamples $F=(P,Q)$ to the Jacobian conjecture such
that all fibres of $P$ are irreducible curves of same a genus.

{\it Keywords and Phrases:} Jacobian conjecture, Rational curve.

{\it 2000 Mathematical Subject Classification:} 14R15, 14E22,
14D06,14H05.

\end{abstract}

% Private commands
\newtheorem{theorem}{Theorem}
\newtheorem{lemma}{Lemma}
\newtheorem{proposition}{Proposition}

%\section{Introduction}

\vskip0.3cm\noindent{\bf 1.} The Jacobian conjecture (JC), posed
first in 1939 by Ott-Heinrich Keller \cite{Keller} and still
opened, asserts that {\it every polynomial maps
$F=(P,Q):\mathbb{C}^2 \longrightarrow \mathbb{C}^2$ with non-zero
constant Jacobian $J(P,Q):=P_xQ_y-P_yQ_x\equiv c\neq 0$ is
invertible}. Its history, many references, and some partial
results, can be found in \cite{Bass} and \cite{Essen-book}.

Since 1979 in his work \cite{Razar} concerning with the plane case
of (JC) Razar had found the following:
\begin{theorem}[\cite{Razar}]\label{Razar} A non-zero constant Jacobian polynomial
map $F=(P,Q)$ is invertible if
all fibres of $P$ are irreducible rational curves.
\end{theorem}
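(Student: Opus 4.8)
\noindent\textit{A proof proposal.} First I would draw the elementary consequence of the Jacobian hypothesis. Since $J(P,Q)\equiv c\neq 0$, the differential $dF=(dP,dQ)$ is everywhere invertible, so $dP$ never vanishes and $P$ has no critical point; hence every fibre $P^{-1}(t)$ is a smooth affine curve, and it is non-empty because a non-constant polynomial in two variables has a non-empty zero locus. Together with the hypothesis, every fibre of $P$ is a smooth irreducible affine rational curve, so $P^{-1}(t)\cong\P^1\setminus\Delta_t$ with $1\le|\Delta_t|=:d_t$. Moreover the Gelfand--Leray form $\omega_t:=dx\wedge dy/dP$ is a nowhere-vanishing holomorphic $1$-form on $P^{-1}(t)$ (again because $dP\neq 0$), and from $dP\wedge dQ=c\,dx\wedge dy=c\,dP\wedge\omega_t$ one gets $dQ|_{P^{-1}(t)}=c\,\omega_t$; thus $Q$ restricts to an \emph{everywhere-unramified} morphism $P^{-1}(t)\to\C$ on each fibre. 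This is the feature of the Jacobian condition I expect to need in an essential way.

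The whole argument then reduces to a single point: \emph{$P$ has at least one fibre isomorphic to $\C$}. Indeed, suppose $P^{-1}(a)\cong\C$. By the Abhyankar--Moh--Suzuki theorem there is a polynomial automorphism $\Phi$ of $\C^2$ taking $P^{-1}(a)$ onto the line $\{y=0\}$; writing $P\circ\Phi^{-1}-a=y^{m}R(x,y)$ with $y\nmid R$, smoothness of this fibre forces $m=1$ (for $m\ge 2$ every point of $\{y=0\}$ would be a critical point of $P\circ\Phi^{-1}$), and irreducibility of the fibre forces $R$ to be a non-zero constant (otherwise $\{yR=0\}$ is reducible). Hence $P\circ\Phi^{-1}$ is an affine function of $y$, so $P$ is a coordinate. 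Consequently, after replacing $F$ by its composition with a suitable polynomial automorphism — which again is a pair with constant non-zero Jacobian, and is invertible iff $F$ is — we may assume $P=x$; then $J(x,Q)=Q_y=c$, so $Q=cy+h(x)$ for some $h\in\C[x]$, and $F=(x,cy+h(x))$ has the polynomial inverse $(u,v)\mapsto\bigl(u,c^{-1}(v-h(u))\bigr)$. So $F$ is invertible.

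To produce a fibre isomorphic to $\C$ I would argue by contradiction, assuming $d_t\ge 2$ for all $t$. A first, routine reduction is topological: $P$ is a locally trivial $C^\infty$-fibration over the complement of a finite set $S\subset\C$, so additivity of the Euler characteristic gives
\[
1=\chi(\C^2)=(1-|S|)(2-d)+\sum_{a\in S}(2-d_a),\qquad\text{i.e.}\qquad d-1=\sum_{a\in S}(d-d_a),
\]
where $d:=d_t$ for generic $t$. If $d=2$ this forces $\sum_{a\in S}(2-d_a)=1$, impossible when every $d_a\ge 2$; so necessarily $d\ge 3$ (and $|S|\ge 2$). \textbf{Excluding the case $d\ge 3$ is the crux of the proof}, and it is precisely where one must use that $F$ is étale rather than merely that $P$ is a submersion. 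Here I would compactify $P$ to a morphism $\pi:X\to\P^1$ by resolving the indeterminacy of $\P^2\dashrightarrow\P^1$, set $D=X\setminus\C^2$, and study the dicritical components $E_1,\dots,E_r$ of $D$ (those dominating $\P^1$); a general fibre of $\pi$ meets $D$ transversally in exactly $d=\sum_j\deg(E_j/\P^1)$ points, so $d\ge 3$ means the fibration has a rich behaviour at infinity. The relations $dQ|_{P^{-1}(t)}=c\,\omega_t$ together with the nowhere-vanishing of $\omega_t$ pin down the pole orders of $Q$ along the $E_j$ and the ramification of $Q|_{P^{-1}(t)}$ over the points of $\Delta_t$; analysing these local data along $D$, together with the irreducibility of \emph{all} fibres, should show that a generic fibre with three or more points at infinity cannot coexist with a polynomial $Q$ making $J(P,Q)$ a non-zero constant. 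I expect this last step to carry essentially all the difficulty of the theorem, the preceding reductions being formal.
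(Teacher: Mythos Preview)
Your reduction to the claim ``some fibre of $P$ is isomorphic to $\C$'' and the subsequent use of Abhyankar--Moh--Suzuki are correct and standard. But you explicitly leave the step ``exclude $d\ge 3$'' unproved, and you yourself say it carries ``essentially all the difficulty of the theorem''. As written, then, this is an outline rather than a proof: the entire content of the result sits in the gap.

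More importantly, your instinct that at this point ``one must use that $F$ is \'etale rather than merely that $P$ is a submersion'' is mistaken. The paper deduces Theorem~\ref{Razar} from the stronger Theorem~\ref{ThmNeumann}: \emph{if all fibres of $P$ are irreducible and rational, then $P$ is already a coordinate} --- no $Q$, no Jacobian hypothesis. The pole/ramification analysis of $Q$ along the dicritical components that you sketch is therefore not the right lever. What actually closes the gap is passing to a minimal compactification $p:X\to\P^1$ and applying Suzuki's formula \emph{there} rather than on $\C^2$. Your Euler-characteristic identity on $\C^2$ discards precisely the information that matters, namely the combinatorics of $\DD=X\setminus\C^2$. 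On $X$ one has $\chi(X)=2+m$ with $m$ the number of irreducible components of $\DD$; after checking (Lemma~\ref{Smooth}, via adjunction and property~(*) of the minimal model) that every $C_s$ with $s\in\C$ is smooth irreducible, Suzuki's formula collapses to the single equation $2g=1-h$, where $h$ is the number of horizontal components. For $g=0$ this gives $h=1$ outright --- there is no residual ``$d\ge 3$'' case to analyse.

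From $h=1$ one finishes either by the Newton-polygon/resolution argument of Section~3, or --- closer in spirit to what you were attempting --- by the remark in Section~4: with a single horizontal curve $D$, the Jacobian condition forces $Q|_D\equiv\infty$, so $Q$ is proper on each generic fibre; being also unramified (your observation), it is an isomorphism onto $\C$. Either way, the decisive move you are missing is to count on the compactification, not on $\C^2$.
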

In other words, it claim that there does not exists
counterexamples $F=(P,Q)$ to (JC) such that all fibres of $P$ are
irreducible rational curves. In attempting to understand the
nature of the plane Jacobian conjecture, Razar 's result had been
reproved by Heitmann \cite{Heitmann}, L\^e and Weber \cite{LeWe2},
Friedland \cite{Friedland}, Nemethi and Sigray
\cite{NemethiSigray} in several different algebraic and
algebro-geometric approaches. It was also proved in
\cite{NemethiSigray} that there does note exists counterexamples
$F=(P,Q)$ to (JC) such that all fibres of $P$ are irreducible
elliptic curves.  Following \cite{NeumannNorbudy}, a polynomial
function $f :\C^2\longrightarrow \C$ is called {\it coordinate} if
there is $g\in \C[x,y]$ such that $(f,g):\C^2\longrightarrow \C^2$
is a polynomial automorphism. In fact, Vistoli \cite{Vistoli} and
Neumann and Norbury \cite{NeumannNorbudy} obtained the following,
which implies Theorem \ref{Razar}.

\begin{theorem}\label{ThmNeumann} A polynomial
function $P:\mathbb{C}^2\longrightarrow\mathbb{C}$ with
irreducible rational fibres is a coordinate.
\end{theorem}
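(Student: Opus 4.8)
The plan is to reduce the statement to the Abhyankar--Moh--Suzuki embedding theorem, with the whole difficulty concentrated in controlling the number of places of the generic fibre at infinity. I would first record two consequences of the hypothesis. If $P=H\circ\widetilde P$ with $\deg H\ge 2$, then a generic fibre of $P$ is a disjoint union of $\deg H$ nonempty fibres of $\widetilde P$, hence reducible; so $P$ is primitive, and therefore its generic fibre is connected. Likewise, if $P-c_0=g^{m}$ with $g$ irreducible and $m\ge 2$, then $P-c=g^{m}-(c-c_0)$ splits into $\ge 2$ pairwise coprime factors for $c\ne c_0$, so some fibre is reducible; hence every fibre $P^{-1}(c)$ is reduced. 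By generic smoothness the generic fibre $F$ is then a smooth, connected, reduced, rational affine curve, so $F\cong\P^1\setminus\{p_1,\dots,p_k\}$ for some $k\ge 1$, where the $p_j$ are the places of $F$ at infinity.

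The key reduction is that it suffices to prove $k=1$. Indeed, if $k=1$ then a generic closed fibre $P^{-1}(c)$ is a smooth irreducible rational affine curve with a single place at infinity, hence isomorphic to $\C$, and it is closed in $\C^2$; by the Abhyankar--Moh--Suzuki theorem there is $\Phi\in\mathrm{Aut}(\C^2)$ with $\Phi\bigl(P^{-1}(c)\bigr)=\{x=0\}$, and since $P-c$ is reduced and cuts out that line we get $(P-c)\circ\Phi^{-1}=\lambda x$ with $\lambda\in\C^{*}$, so $P=\lambda\cdot(x\circ\Phi)+c$ is a coordinate. From here on the goal is to rule out $k\ge 2$.

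To do this I would resolve the pencil $\{P=c\}_{c\in\P^1}$ on $\P^2$, obtaining a morphism $\bar P\colon S\to\P^1$ from a smooth projective surface with $D:=S\setminus\C^2$ a simple normal crossing divisor whose dual graph is a tree of rational curves. The general fibre $\bar F\cong\P^1$ meets $D$ transversally in $k$ points, which lie on the \emph{dicritical} components of $D$ (those dominating $\P^1$); so $k\ge 2$ means $D$ has either two dicritical components or one of degree $\ge 2$ over $\P^1$. For every $c$ one has $P^{-1}(c)=\bar F_c\setminus D$, and since $\bar F_c$ is a connected curve of arithmetic genus $0$, the fibre $P^{-1}(c)$ is irreducible exactly when $\bar F_c$ has a single component not contained in $D$. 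The crux is then the \textbf{Claim}: if $k\ge 2$, some closed fibre $\bar F_c$ contains at least two components outside $D$, so $P^{-1}(c)$ is reducible, contradicting the hypothesis (the toy case is $P=xy$, with $k=2$ and $\{xy=0\}=\{x=0\}\cup\{y=0\}$). To establish the Claim I would run the relative minimal model program on $\bar P$, reducing to a Hirzebruch surface (a $\P^1$-bundle over $\P^1$) and tracking how the dicritical components of $D$ and the contracted $(-1)$-curves sit relative to the fibration, supplemented by Euler characteristic bookkeeping: writing $c_1,\dots,c_m$ for the bifurcation values, additivity of $\chi$ gives $1=\chi(\C^2)=(1-m)(2-k)+\sum_{i=1}^{m}\chi(P^{-1}(c_i))$, with $\chi(P^{-1}(c_i))=2-k_{c_i}-\delta_i$, $k_{c_i}\ge 1$ and $\delta_i\ge 0$ measuring the singularities.

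The main obstacle is precisely this Claim. The Euler characteristic identity alone does not suffice: combined with $k_{c_i}\ge1$ it only yields $(m-1)(k-1)\ge 0$, which is vacuous as soon as there is one bifurcation value, so one genuinely needs the fine geometry of the divisor at infinity --- the arrangement of the dicritical components inside the tree $D$, their degrees over $\P^1$, and the way a ``second branch at infinity'' is forced to propagate into a degenerate fibre and split it. This is exactly the technical content of the arguments of Vistoli and of Neumann and Norbury (and, on the topological side, of Neumann's analysis of links of plane affine curves at infinity), and it is that route I would follow to finish the proof.
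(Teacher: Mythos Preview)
Your plan is correct and is precisely the classical route---reduce to a single place at infinity for the generic fibre and then invoke Abhyankar--Moh--Suzuki---that the paper explicitly sets aside in Section~3. The paper's own argument first uses Suzuki's Euler-characteristic formula (its Lemmas~1 and~2, specialised to $g=0$) to conclude that $P$ has exactly one \emph{horizontal curve} in a minimal regular extension $p:X\to\P^1$. From that single-horizontal-curve condition it argues via Newton--Puiseux theory (Fact~1) that the Newton polygon of $P$ is the triangle on $(0,0)$, $(\deg_x P,0)$, $(0,\deg_y P)$ with edge polynomial $P_E=C(y^q-ax^p)^m$, $\gcd(p,q)=1$. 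The heart of the paper's proof (Fact~2) is then a direct inspection of the first Puiseux chain in the dual graph of $\DD$: the self-intersection constraints $D_0^2=-1$ and $e_i^2=-2$, forced by minimality and by the fact that the reducible fibre $C_\infty$ of the $\P^1$-fibration must blow down to a single smooth rational curve, pin down $p=1$ (or symmetrically $q=1$). A triangular automorphism then strictly lowers $\deg P$, and one concludes by induction, \emph{without} AMS or Zariski's main theorem. What your approach buys is a clean reduction to one geometric claim (your ``Claim'') that plugs directly into the Vistoli/Neumann--Norbury machinery you cite; what the paper's approach buys is a more elementary, self-contained degree-lowering argument that replaces the AMS black box by explicit resolution-graph combinatorics and Newton-polygon bookkeeping.
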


This theorem, as mentioned in \cite{NeumannNorbudy},  is implicit
in Myianishi's  earlier work \cite{Miyanishi} (Lemma 1.7). In this
article we would like to present the following improvement of
Theorem \ref{ThmNeumann}.

\begin{theorem}\label{Main} A polynomial
function $P:\mathbb{C}^2\longrightarrow\mathbb{C}$ with
irreducible fibres of same a genus is a coordinate.
\end{theorem}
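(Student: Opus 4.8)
The plan is to deduce Theorem \ref{Main} from Theorem \ref{ThmNeumann}. Since a coordinate has all its fibres isomorphic to $\C$, hence of genus $0$, it is enough to show that a polynomial $P$ with irreducible fibres all of the same genus $g$ must have $g=0$; equivalently, there is no such $P$ with $g\geq 1$. So I would argue by contradiction, assuming $g\geq 1$. First I would compactify the pencil defined by $P$: after blowing up the finitely many base points of the pencil $\{\,\overline{F_t}\subset\P^2\,\}_{t\in\C}$ (all lying on the line at infinity, since in $\C^2$ the fibres $F_t:=P^{-1}(t)$ are pairwise disjoint) one obtains a smooth projective \emph{rational} surface $X$ together with a morphism $\bar P\colon X\to\P^1$ extending $P$. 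Set $D:=X\setminus\C^2$; every component of $D$ is a smooth rational curve and $D$ is connected. Since $P$ has irreducible generic fibre, $\bar P$ has connected fibres, and for generic $t$ the fibre $\bar P^{-1}(t)=\overline{F_t}$ is smooth, so its genus equals its arithmetic genus; as this genus is $g$, every fibre of $\bar P$ has arithmetic genus $g$.

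The key step is to show that, after passing to a relatively minimal model $\bar P_m\colon X_m\to\P^1$ of $\bar P$ (obtained by successively contracting the $(-1)$-curves that lie in fibres; this keeps $X_m$ smooth and rational, $\bar P$ factors through it, and the components $\overline{F_t}$, being of genus $g\geq 1$, are never contracted), \emph{every} fibre of $\bar P_m$ is a smooth curve of genus $g$. Fix $t_0$. Because \emph{all} fibres of $P$, not merely the generic one, are irreducible, the polynomial $P-t_0$ is reduced (otherwise $P=c\,h^{m}+t_0$ with $m\geq 2$, forcing the generic fibre $\{h^{m}=\mathrm{const}\}$ to be reducible); hence $\bar P_m^{-1}(t_0)=A+V$, where $A=\overline{F_{t_0}}$ is the only component not contained in $D$ — occurring with multiplicity one — and $V=\sum_i n_i E_i$ is supported on $D$. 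Now I would run the adjunction bookkeeping on $X_m$: from $A\cdot(A+V)=V\cdot(A+V)=0$ one gets $V^2=-A\cdot V$, from $p_a(E_i)=0$ one gets $E_i\cdot K_{X_m}=-2-E_i^2$, and substituting into $g=p_a(A+V)=p_a(A)+p_a(V)+A\cdot V-1$ together with $p_a(A)=g+\delta$ — where $\delta\geq 0$ is the total $\delta$-invariant of $A$, using that the normalisation of $A$ is the smooth model of $F_{t_0}$, of genus $g$ — yields
\[
A\cdot V+2\delta=\sum_i n_i\,(2+E_i^2).
\]
Relative minimality forces $E_i^2\leq -2$, so the right-hand side is $\leq 0$ while the left-hand side is $\geq 0$; hence $\delta=0$ and $A\cdot V=0$, and since the fibre $\bar P_m^{-1}(t_0)$ is connected, $A\cdot V=0$ gives $V=0$. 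Thus $\bar P_m^{-1}(t_0)=A$ is a smooth irreducible curve of genus $g$ for every $t_0$.

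It then follows that $\bar P_m\colon X_m\to\P^1$ is a proper submersion, hence by Ehresmann's theorem a $C^\infty$ fibre bundle with fibre a compact genus-$g$ surface, so $\chi_{\mathrm{top}}(X_m)=\chi_{\mathrm{top}}(\P^1)\,(2-2g)=4-4g$. But $X_m$ is a smooth projective rational surface, so $b_1(X_m)=b_3(X_m)=0$ and $b_2(X_m)\geq 1$, whence $\chi_{\mathrm{top}}(X_m)=2+b_2(X_m)\geq 3$. Therefore $4-4g\geq 3$, i.e.\ $g=0$ — contradicting $g\geq 1$. Consequently $g=0$, so all fibres of $P$ are irreducible rational curves, and Theorem \ref{ThmNeumann} shows that $P$ is a coordinate.

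I expect the main obstacle to be the middle step: extracting from \emph{irreducibility and constancy of the genus alone} that nothing degenerates over the atypical values or at infinity. One must use that the genus in the hypothesis is the geometric genus of the projective model, must genuinely invoke irreducibility of the special (not only generic) fibres to exclude multiple fibres, and must justify carefully the reduction to a relatively minimal model together with the attendant intersection-theoretic computation — in particular that any rational trees created over the atypical values at infinity are entirely absorbed by that reduction. The outer reduction to Theorem \ref{ThmNeumann} and the closing Euler-characteristic estimate should then be straightforward.
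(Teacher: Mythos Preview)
Your adjunction argument on the relatively minimal model is correct for fibres over $t_0\in\C$ and is essentially the paper's Lemma~1 in a slightly different packaging (the paper works on the minimal compactification and uses property~$(*)$ instead of relative minimality, but the inequality $E_i^2\le -2$ and the conclusion $V=0$, $\delta=0$ are the same). The gap is exactly where you feared: the fibre over $\infty$.

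Your argument never touches $\bar P_m^{-1}(\infty)$. The decomposition $\bar P_m^{-1}(t_0)=A+V$ with $A=\overline{F_{t_0}}$ only makes sense for $t_0\in\C$, so your conclusion ``$\bar P_m^{-1}(t_0)$ is smooth of genus $g$'' is established only over $\C$. Over $\infty$ the fibre is built out of components of $D$, hence out of rational curves; if $g\ge 1$ it \emph{cannot} be a smooth curve of genus $g$, so $\bar P_m$ is never a submersion and Ehresmann does not apply. Your expectation that the rational trees at infinity are ``entirely absorbed'' by the relatively minimal reduction is false in general: relatively minimal genus-$g$ fibrations over $\P^1$ on rational surfaces with a single, highly degenerate fibre do exist (for $g=1$ take a rational elliptic surface with a single fibre of type $I_{12}$ or $I_6^*$, which has $\chi=12$ concentrated in one fibre). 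So the inequality $\chi_{\mathrm{top}}(X_m)=4-4g$ is unjustified, and $\chi_{\mathrm{top}}(X_m)\ge 3$ by itself gives no contradiction.

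What is missing is precisely the extra input that $X$ compactifies $\C^2$: the boundary $D$ is a \emph{tree} of rational curves and at least one of its components is \emph{horizontal}. The paper exploits this in Lemma~2 via Suzuki's formula. Equivalently, once you know the fibres over $\C$ are smooth, $D$ decomposes into the $h\ge 1$ horizontal curves and the $m_\infty$ components of $C_\infty$; computing $\chi(X)$ once as $\chi(\C^2)+\chi(D)=1+(m+1)$ and once as $\chi(\bar P^{-1}(\C))+\chi(C_\infty)=(2-2g)+(m_\infty+1)$ gives $h=m-m_\infty=1-2g$, forcing $g=0$ (and $h=1$). This replaces your Ehresmann step and finishes the proof; without some use of the horizontal curves and the tree structure of $D$, the Euler-characteristic bound alone does not close the argument.
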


Here, as usual, we mean the genus of an irreducible algebraic
curve to be the genus of the desingularization of this curve. In
other words, Theorem \ref{Main} says that the genus of the fibres
of any polynomial with irreducible fibres must  vary, except for
coordinates. In view of this theorem there does not exist
counterexamples $F=(P,Q)$ to (JC) such that all fibres of $P$ are
irreducible curves of same a genus. Note that, following Kaliman
\cite{Kaliman}, to prove the plane case of (JC) it is sufficient
to consider only non-zero constant Jacobian polynomial maps
$F=(P,Q)$ in which all fibres of $P$ are irreducible. Further
investigations on how the genus of the fibres of polynomial
functions in $\C^2$ change should be useful in hunting the
solution of the plane Jacobian conjecture.

The proof of Theorem \ref{Main}, presented in Section 2, is a
combination of Theorem \ref{ThmNeumann} and the fact that if the
fibres of $P$ are irreducible curves of same a genus $g$, then
$g=0$ (Lemma 1 and Lemma 2). The proofs of Theorem
\ref{ThmNeumann} in \cite{NeumannNorbudy}, \cite{Miyanishi} and
\cite{Vistoli} use  Abhyankar-Moh-Suzuki Embedding Theorem,
 or  Zariski's main
theorem. In Section 3 we will give another elementary proof of
Theorem \ref{ThmNeumann} by applying Newton-Puiseux Theorem and
the basic properties of the standard resolution of singularities.

\vskip0.3cm\noindent{\bf 2.}  Let $P\in\C[x,y]$ be a given
non-constant polynomial. Regard the plane $\mathbb{C}^2$ as a
subset of the projective plane $\mathbb{P}^2$ and consider $P$ as
a rational morphism $P:\P^2\longrightarrow \P$, which is well
defined everywhere on $\P^2$, except a finite number of points in
the line at infinity $z=0$. By blowing-up we can remove
indeterminacy points of $P$ and obtain a compactification $X$ of
$\C^2$ and a regular extension $p:X\longrightarrow \P$ of $P$ over
$X$. The divisor $\DD:=X\setminus \C^2$ is the union of smooth
rational curves with simple normal crossings. An irreducible
component $D$ of $\DD$ is called {\it horizontal } curve of $P$ if
$p_{|D}$ is a non-constant mapping. Note that the number of
horizontal curves of $P$ does not depend on the regular extension
$p$. Following \cite{Le-kyoto} such a compactification $X$ is
called {\it minimal} if among components of $\DD$ the only the
proper transform of the line at infinity $z=0$ and horizontal
curves may have self-intersections $-1$. A regular extension of
$P$ over a minimal compactification of $\C^2$ can be constructed
by a blowing-up process in which we blow up  only at the
indeterminacy points of $P$ and of the resulted blowing-up
versions of $P$.

Now, suppose  $p:X\longrightarrow \mathbb{P}^1$ is a regular
extension of $P$ over a  minimal compactification $X$ of
$\mathbb{C}^2$. Let us denote by $C_s$ the fiber of $p$ over $s\in
\P^1$ and by $C$ the generic fiber of $p$. Let $\DD_s:=\DD\cap
C_s$ - the portion of $C_s$ contained in $\DD$. The minimality of
the compactification $X$ of $\C^2$ yields the following
usefulness:
\begin{enumerate}
\item[(*)] {\it Every irreducible components in $\DD_s$, $s\in
\C$, must has self-intersections less than $-1$.}
\end{enumerate}

We begin with the following observation.

\begin{lemma}\label{Smooth} Assume that all fibres of $P$ are irreducible curves of same a genus
$g$. Then, the fibres $C_s$, $s\in \C$, are smooth irreducible
curves of same genus $g$.
\end{lemma}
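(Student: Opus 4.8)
The plan is to run a numerical argument on the smooth projective surface $X$, comparing the arithmetic genus of the closed fibres of $p$ (which is independent of the fibre) with the geometric genus $g$ of the affine fibres, and to extract smoothness and the absence of a part at infinity from one inequality.

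First I would assemble the soft facts. Since $X$ is a smooth projective surface, $p\colon X\to\P^1$ is surjective onto a smooth curve, and only finitely many $\DD_s$ contain curves, the fibre $C_s$ equals the closure $\overline{P^{-1}(s)}$ for all but finitely many $s\in\C$; as these closures are irreducible, the generic fibre of $p$ is irreducible, hence --- by generic smoothness, in characteristic $0$ --- a smooth projective curve birational to $P^{-1}(s)$ for generic $s$, so of genus $g$. All closed fibres of $p$ are linearly equivalent (to $p^{*}$ of a point) and have zero self-intersection, so by the adjunction formula $p_a(C_s)=1+\tfrac12\,C_s\cdot(C_s+K_X)=1+\tfrac12\,C_s\cdot K_X$ is independent of $s$; therefore $p_a(C_s)=g$ and $C_s\cdot K_X=2g-2$ for every $s$. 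I would also observe that the hypothesis forbids multiple affine fibres: if $P-s_0$ were not squarefree, then, its reduced zero locus being irreducible, we would have $P-s_0=\ell^{a}$ with $a\ge2$ and $\ell$ irreducible, so $P-(s_0+\varepsilon)=\ell^{a}-\varepsilon$ would be reducible for $\varepsilon\ne0$, contradicting irreducibility of the fibres near $s_0$. Hence $\Gamma:=\overline{P^{-1}(s_0)}$ appears in the divisor $C_{s_0}$ with multiplicity $1$.

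Now I would fix $s_0\in\C$ and write $C_{s_0}=\Gamma+\sum_j n_j E_j$, where the $E_j$ are the one-dimensional components of $\DD_{s_0}$ --- smooth rational curves with $E_j^{2}\le-2$ by property $(*)$ --- and $n_j\ge1$. Being irreducible and birational to $P^{-1}(s_0)$, the curve $\Gamma$ has $p_a(\Gamma)=g+\delta$ with $\delta\ge0$ its total $\delta$-invariant, and $\delta=0$ iff $\Gamma$ is smooth. Since $\Gamma$ is a component of the fibre, $C_{s_0}\cdot\Gamma=0$, so $\Gamma^{2}=-\sum_j n_j(E_j\cdot\Gamma)\le0$. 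On the other hand, inserting $C_{s_0}\cdot K_X=2g-2$ into the expansion of $C_{s_0}\cdot K_X$ and using adjunction for $\Gamma$ and for each $E_j$ (so $\Gamma\cdot K_X=2(g+\delta)-2-\Gamma^{2}$ and $E_j\cdot K_X=-2-E_j^{2}\ge0$) yields
\[
\Gamma^{2}=2\delta+\sum_j n_j\,(E_j\cdot K_X)\ \ge\ 2\delta\ \ge\ 0.
\]
Hence $\Gamma^{2}=0$, which forces $\delta=0$ --- so $\Gamma$ is smooth of genus $g$ --- and $E_j\cdot\Gamma=0$ for every $j$. Finally $C_{s_0}$ is connected (fibres of a fibration with connected generic fibre are connected), so no $E_j$ can occur, as it would have to meet $\Gamma$. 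Thus $C_{s_0}=\Gamma$ is a smooth irreducible curve of genus $g$, and since $s_0\in\C$ was arbitrary the lemma follows.

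The step I expect to be delicate is this last squeeze: a priori $C_{s_0}$ may acquire curves at infinity, and eliminating them while simultaneously proving $\Gamma$ smooth rests on the displayed identity --- in particular on property $(*)$, i.e.\ on the minimality of $X$, which is exactly what guarantees $E_j\cdot K_X\ge0$; a $(-1)$-curve among the $E_j$, not excluded without minimality, would have $E_j\cdot K_X=-1$ and wreck the sign argument. The remaining ingredients --- constancy of $p_a$ in the family, generic smoothness, and the fact that irreducibility of all fibres rules out multiple fibres --- are routine.
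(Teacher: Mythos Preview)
Your argument is correct and follows essentially the same adjunction-plus-property-$(*)$ computation as the paper: both expand $K_X\cdot C_{s_0}=2g-2$ over the components of the special fibre, use $E_j^2\le -2$ to make the boundary contributions nonnegative, and compare with $p_a(\Gamma)\ge g$. The only packaging differences are that you first pin down the multiplicity of $\Gamma$ as $1$ and then finish via $\Gamma^2=0$ and connectedness of the fibre, whereas the paper leaves the multiplicity $n$ unspecified and invokes Zariski's lemma to get the strict inequality $F_s^2<0$ directly.
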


\begin{proof} As usual, we denote by $K_X$ the canonical bundle of the surface
$X$, by $\pi(V)$ the virtual genus of an algebraic curve $V$ in
$X$, and by $g(V)$ the genus of the desingularization of $V$ for
when $V$ is irreducible. By the adjunction formula
$$2\pi(V)-2=K_X.V+V.V,$$ and,  for irreducible
curve $V$, $\pi(V)=g(V)$ if and only if $V$ is smooth.
Furthermore, if $V$ is a fiber of a fibration over $X$, then
$V.V=0$ and hence
$$2\pi(V)-2=K_X.V,$$
(see, for example, in \cite{Griff}). For $s\in \C$ let us denote
by $F_s$ the closure in $X$ of the curve $\{(x,y)\in \mathbb{C}^2:
P(x,y)=s\}$. By assumptions the curves $F_s$ are irreducible and
$$g(F_s)\equiv g, \ s\in \C.\eqno(1)$$

Now, let $s\in \C$ be given. By the adjunction formula (see in
\cite{Griff})
$$2g-2=K_X.C_s.\eqno(2)$$
If $C_s$ is irreducible, we have $C_s=F_s$ and $F_s.F_s=0$. Again
by the adjunction formula we get $2\pi(F_s)-2=K_X.F_s$. Therefore,
by (1) and (2) we obtain $\pi(F_s)=g=g(F_s)$. Thus, $C_s $ is a
smooth irreducible curves of genus $g$. So, to complete the proof
we need to show only that $C_s$ is irreducible. Indeed, assume the
contrary that  $C_s$ is not irreducible. Write
$$C_s=\sum_{i=1}^km_iC_i+nF_s,$$
where $C_i$ are irreducible components of $\DD_s$ with
multiplicity $m_i$ and $n$ is the multiplicity of $F_s$ in $C_s$.
The equality (2) becomes
$$2g-2=\sum_{i=1}^k m_iK_X.C_i+ nK_X.F_s.\eqno(3)$$
Since $C_i$ are smooth irreducible rational curves, $\pi(C_i)=0$.
Furthermore, $C_i.C_i <-1$ by Property (*) and $F_s^2<0$ by
Zariski's lemma (see, for example, \cite{Ven}). Then, applying the
adjunction formula to $C_i$ and $F_s$ we  have
$$K_X.C_i=-(C_i.C_i+2)\geq 0$$
and
$$K_X.F_s=2\pi(F_s)-2-F_s.F_s>2\pi(F_s)-2.$$
From the above estimations and (3)  it follows that
$2g-2>2\pi(F_s)-2$. This is impossible, since $\pi(F_s)\geq
g(F_s)=g$. Hence, $C_s$ must be irreducible.
\end{proof}

\begin{lemma}\label{rational} Let $P$ be as in Lemma 1. Then,
$g=0$ and $P$ has only one horizontal curve.
\end{lemma}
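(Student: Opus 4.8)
The plan is to compute the topological Euler characteristic $\chi(X)$ in two different ways and to read off $g$ and the number $h$ of horizontal curves of $P$ from the resulting identity.

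First I would record what Lemma~\ref{Smooth} gives: for every $s\in\C$ the fibre $C_s$ equals $F_s$ and is a smooth irreducible projective curve of genus $g$, so $\chi(C_s)=2-2g$. I would then note that $C_\infty:=p^{-1}(\infty)$ is the only fibre of $p$ that meets $\DD$: since $P$ is a polynomial it never takes the value $\infty$, so $C_\infty\subseteq\DD$; and if a component $D\subseteq\DD$ were vertical over some $s_0\in\C$, then $D$ would be an irreducible component of the irreducible curve $C_{s_0}=F_{s_0}$, forcing $D=F_{s_0}$, which is absurd as $F_{s_0}$ meets $\C^2$. Hence the irreducible components of $\DD$ are exactly the $h$ horizontal curves together with the $m$ components of $C_\infty$, so $\DD$ has $N:=h+m$ components, and $C_\infty$ is the only fibre of $p$ whose Euler characteristic can differ from $2-2g$.

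Second, I would use that $\DD$, being the boundary of a smooth compactification of $\C^2$, is a connected configuration of smooth rational curves with normal crossings whose dual graph is a tree (this follows from $H^1(\C^2)=0$ via the cohomology sequence of the pair $(X,\DD)$, which forces $H^1(\DD)=0$). Therefore $\chi(\DD)=N+1$, and since $C_\infty$ is connected — the fibres of $p$ are connected because the generic fibre of $P$ is irreducible — with dual graph a subtree of that of $\DD$, also $\chi(C_\infty)=m+1$. Additivity of $\chi$ then gives $\chi(X)=\chi(\C^2)+\chi(\DD)=1+(N+1)=N+2$ on the one hand, and, applied to the proper morphism $p:X\longrightarrow\P^1$,
\[
\chi(X)=\chi(\P^1)\,(2-2g)+\bigl(\chi(C_\infty)-(2-2g)\bigr)=(2-2g)+(m+1)
\]
on the other. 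Comparing the two values yields $N+2=m+3-2g$, that is, $h=N-m=1-2g$.

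Finally, an affine fibre $P^{-1}(s)=C_s\setminus(C_s\cap\DD)$ is never complete, so every $C_s$ with $s\in\C$ must meet $\DD$, necessarily along a horizontal curve; hence $h\geq1$. Together with $g\geq0$ the identity $h=1-2g$ forces $g=0$ and $h=1$, which is the assertion of the lemma. The one point requiring care is the fact that the dual graphs of $\DD$ and of $C_\infty$ are trees, since this is exactly what makes $\chi(\DD)$ and $\chi(C_\infty)$ depend only on the numbers of irreducible components; the remaining steps are bookkeeping with Lemma~\ref{Smooth} and the additivity of the Euler characteristic.
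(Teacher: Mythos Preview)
Your argument is correct and is essentially the same as the paper's: both compute $\chi(X)=N+2$ from the tree structure of $\DD$, compute $\chi(C_\infty)$ analogously, and compare with the Euler-characteristic identity for the fibration $p$ (which the paper quotes as Suzuki's formula) to obtain $h=1-2g$, whence $g=0$ and $h=1$. The only cosmetic difference is that you justify in more detail why $\DD$ and $C_\infty$ are trees of rational curves and why $C_\infty$ is the unique special fibre, points the paper takes for granted.
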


\begin{proof} We will use Suzuki's formula \cite{Suzuki}
$$\sum_{s\in\P^1}\chi(C_s)-\chi(C)=\chi(X)-2\chi(C)\eqno(4)$$
Here, $\chi(V )$ indicates the Euler-Poincare characteristic of
$V$. Let us denote by $m$ the number of irreducible components of
the divisor $\DD$, by $m_\infty$  the number of irreducible
components of $C_\infty$ and by $h$ the number of the horizontal
curves of $P$. Note that $\chi(X)= 2+m$ and
$\chi(C_\infty)=1+m_\infty.$ Furthermore, in view of Lemma
\ref{Smooth} $\chi(C_s)= \chi(C)=2-2g$ for all $ s\in \C$ and $
h=m-m_\infty.$ Now, by the above estimations we have
$$\sum_{s\in
\P^1}\chi(C_s)-\chi(C)=\chi(C_\infty)-\chi(C)=1+m_\infty-(2-2g)$$
and $$\chi(X)-2\chi(C)=2+m-2(2-2g).$$ Then, the equality (4)
becomes
$$1+m_\infty-(2-2g)=2+m-2(2-2g),$$
or equivalent,
$$2g=1-(m-m_\infty)=1-h.$$
Since $g\geq 0$ and $h\geq 1$, it follows that $g=0$ and $h=1$.
\end{proof}

\begin{proof}[Proof of Theorem 3]
Combining Lemma \ref{rational} with Theorem \ref{ThmNeumann}.
\end{proof}

\vskip0.3 cm\noindent{\bf 3. }  The main arguments in the proofs
of Theorem \ref{ThmNeumann} in \cite{NeumannNorbudy},
\cite{Vistoli} and \cite{Miyanishi}, leads to the fact that if the
fibres of $P$ are rational irreducible curves, then $P$ has
 only one  horizontal curve and its fibres are isomorphic to $\C$. The fact that $P$ is a
coordinate then follows from Abhyankar-Moh-Suzuki Embedding
Theorem, as done in \cite{Miyanishi}(Lemma 1.7), or from Zariski's
main theorem, as in \cite{Vistoli}(Lemma 4.8). Instead of such
ways,  Theorem \ref{ThmNeumann} can also be proved by using the
observations below.

\medskip i) Let $H\in \C[x,y]$, $H(x,y)=\sum_{ij} c_{ij}x^iy^j$.
Recall that the so-called {\it Newton polygon} $N_H$ of $H$ is the
convex hull of the set $\{(0,0)\}\cup\{(i,j): c_{ij}\neq 0\}$.
\begin{fact} Assume that  $\deg_x H>0$ and $\deg_yH>0$. If  $H$ has only one horizontal curve,
then $N_H$ is a triangle with vertices $(0,0)$, $(\deg_xH,0)$ and
$(0,\deg_y H)$ and the summation $H_E$ of monomials $c_{ij}x^iy^j$
in $H$ with $(i,j)$ lying in the edge joining $(\deg_xH,0)$ and
$(0,\deg_yH)$ is of the form
$$H_E(x,y)=C(y^q-ax^p)^m$$
 where $C\neq 0$, $a\neq 0$, $p, q, m$ are natural numbers and $\gcd(p,q)=1$.
\end{fact}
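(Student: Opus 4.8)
The plan is to argue by contraposition: supposing that $N_H$ is not the stated triangle, or that $H_E$ is not a power of a single binomial $y^q-ax^p$, I will exhibit at least two horizontal curves of $H$. Put $\alpha:=\deg_x H>0$, $\beta:=\deg_y H>0$, and look at the part of $\partial N_H$ facing $(+\infty,+\infty)$: the (possibly one-point) vertical side $\rho_x:=\partial N_H\cap\{i=\alpha\}$, the (possibly one-point) horizontal side $\rho_y:=\partial N_H\cap\{j=\beta\}$, and the slanted edges $E_1,\dots,E_k$ (those with outward normal $(\nu_1,\nu_2)$, $\nu_1,\nu_2>0$). Attach to each such face a one-variable polynomial --- $\psi_{\rho_x}(y)$ the coefficient of $x^\alpha$ in $H$, $\psi_{\rho_y}(x)$ the coefficient of $y^\beta$, and, for a slanted edge $E$ with primitive outward normal $(\nu_1,\nu_2)$, the polynomial $\psi_E$ with $H_E=(\text{monomial})\cdot\psi_E(y^{\nu_1}x^{-\nu_2})$ --- and let $m(\rho)$ count the distinct roots of $\psi_\rho$ in $\C$ (for a slanted edge, only the \emph{nonzero} roots). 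The crux of the proof is the inequality
$$h\ \geq\ m(\rho_x)+m(\rho_y)+\sum_{l=1}^{k}m(E_l)\eqno(\dagger)$$
whose right-hand side is always $\geq 1$, and equals $1$ exactly when $\rho_x=\{(\alpha,0)\}$, $\rho_y=\{(0,\beta)\}$, $k=1$ and $\psi_{E_1}$ has a single root --- which, by an elementary inspection of the lattice polygon $N_H$, happens exactly when $N_H$ is the triangle in the statement and $H_E$ is a power of one binomial.

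To prove $(\dagger)$ I would first fix a generic $s$ and, by the Newton-Puiseux theorem, list the branches at infinity of $F_s=\{H=s\}$: each root $y_0$ of $\psi_{\rho_x}$ contributes a branch with $x\to\infty$, $y\to y_0$; each root $x_0$ of $\psi_{\rho_y}$ a branch with $y\to\infty$, $x\to x_0$; and each nonzero root $b$ of $\psi_E$, for a slanted edge $E$ with normal $(\nu_1,\nu_2)$, a branch with $x,y\to\infty$ and $y^{\nu_1}\sim b\,x^{\nu_2}$ --- for the last one substitutes the corresponding monomial parametrisation into $H-s$, whose lowest part is, up to a monomial, $\psi_E$, and applies Newton-Puiseux. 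These are all the branches at infinity of $F_s$. Next, passing to the minimal compactification $p:X\to\P^1$ of Section 2, the generic fibre $C$ meets $\DD$ only along horizontal curves (a vertical component of $\DD$ lies in one $C_{s_0}$, hence misses $C$), so each branch sits over exactly one horizontal curve; $(\dagger)$ then follows once one shows that two branches sitting over the same horizontal curve must come from the same face and, for a slanted edge, from the same root.

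That last separation statement is the step I expect to be the main obstacle. Branches attached to faces of different slope $\nu_2/\nu_1$ (with the limiting slopes $\infty$ and $0$ for $\rho_x,\rho_y$) either limit to distinct points of the line at infinity of $\P^2$ or to the same point with distinct toric (continued-fraction) expansions, and the standard resolution of the indeterminacy of $P$ at infinity separates them onto disjoint exceptional configurations, hence onto distinct horizontal curves. The genuinely delicate case is two branches attached to a single slanted edge $E$ through two distinct nonzero roots $b_1\neq b_2$ of $\psi_E$: these branches agree through the blowing-ups that resolve the normal direction $(\nu_1,\nu_2)$, landing on the resulting exceptional curve $E'$ at the two points $b_1,b_2$; one checks that the members of the pencil $\{H=s\}$ meet $E'$ exactly in the finitely many, $s$-independent roots of $\psi_E$, so that $E'$ is not a horizontal curve, while the subsequent resolutions over $b_1$ and over $b_2$ proceed in disjoint neighbourhoods and each creates at least one horizontal curve, these two being distinct. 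Some care is needed to run this argument uniformly, accounting for the degenerate faces $\rho_x,\rho_y$ and for the hypotheses $\deg_x H,\deg_y H>0$. Granting $(\dagger)$, the rest is immediate: if $h=1$ then the right-hand side of $(\dagger)$ equals $1$, so $N_H$ is the asserted triangle and, writing the single-slope face polynomial $H_E$ in the variable $w=y^{q}x^{-p}$ with $m=\gcd(\deg_xH,\deg_yH)$, $p=\deg_xH/m$, $q=\deg_yH/m$, factoring the resulting degree-$m$ polynomial and using that it has only one root yields $H_E=C(y^q-ax^p)^m$ with $C\neq 0$, $a\neq 0$ and $\gcd(p,q)=1$.
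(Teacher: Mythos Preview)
Your approach is correct and is essentially the paper's own argument, spelled out in far greater detail and cast contrapositively: the paper simply asserts that a single horizontal curve forces all branches at infinity of a generic fibre $H=c$ to share the same leading Newton--Puiseux term $y=bx^{p/q}$ (equivalently $x=by^{q/p}$), and then reads off $N_H$ and $H_E$ from this via the Newton--Puiseux theorem. Your counting inequality $(\dagger)$ together with the separation of branches through the resolution is exactly the content underlying that assertion, so the two proofs coincide in substance.
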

\noindent In fact, if $H$ has only one horizontal curve, the
branches at infinity of any generic fiber $H=c$  can be given by
Newton-Puiseux expansions  of same one of the forms
$$y=bx^{p/q}+\text{lower terms in } x ,\  p/q\leq 1$$
 and $$x=by^{q/p}+\text{lower
terms in } y ,\  q/p\leq 1, $$ where $b\neq 0$ and $\gcd(p,q)=1$.
The Newton polygon $N_H$ and the face polynomial $H_E$ then can be
detected by using Newton-Puiseux theorem and the basic properties
of Newton-Puiseux expansions (see \cite{Brieskorn}).

\medskip ii) Let $P\in \C[x,y]$ with $\deg P>1$, $\deg_xP>0$ and
$\deg_yP>0$. Assume that the fibres of $P$ are irreducible
rational curves, and hence, $P$ has only one horizontal curve. In
view of Fact 1, we can assume that $P_E(x,y)=C(y^q-ax^p)^m$ with
$C\neq 0$, $a\neq 0$ and $\gcd(p,q)=1$.
\begin{fact} $P_E(x,y)$ is of the forms $C(y^q-ax)^m$ and $C(y^q-ax)^m$, $q\geq 1$.
\end{fact}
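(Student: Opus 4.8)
The plan is to rule out the case $p\geq 2$, using the hypothesis that the fibres of $P$ are irreducible together with the structure of the branches at infinity encoded by $P_E$. By Fact 1 we already know $P_E(x,y)=C(y^q-ax^p)^m$ with $C\neq 0$, $a\neq 0$, $\gcd(p,q)=1$, and we may assume (interchanging $x$ and $y$ if necessary) that $q\geq p\geq 1$; the assertion to be proved is that in fact $p=1$.

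First I would analyze what $p\geq 2$ forces on the branches at infinity of a generic fibre $P=c$. Since $P$ has only one horizontal curve, all branches at infinity of a generic fibre are governed by the single edge $E$ and, by the Newton--Puiseux description recalled after Fact 1, each such branch has a Puiseux expansion of the form $y=b\,x^{p/q}+(\text{lower order in }x)$ with $b^q=a$ (or the symmetric form in the other chart). When $p\geq 2$, the factor $y^q-ax^p$ is not itself a coordinate-type factor, and I would argue that the exponent data $(p,q)$ with $\gcd(p,q)=1$ and $q\geq p\geq 2$ is incompatible with irreducibility of the affine fibre. The mechanism is the following: at the point of the line at infinity corresponding to the edge $E$, after the standard resolution, the generic fibre $C_c$ meets the exceptional configuration, and the number and arrangement of intersection points — equivalently, the number of places at infinity of $F_c$ — can be read off from the continued-fraction expansion of $q/p$. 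The key inequality I expect to exploit is the one relating the degree $\deg P$, the intersection multiplicities of $F_c$ with the components of $\mathcal D$ at infinity, and the self-intersection constraints; concretely, I would show that $p\geq 2$ forces $F_c$ to have either several branches at infinity or a branch whose local behaviour contradicts the irreducibility/rationality already available from Lemma 2, which gives $g=0$ and a single horizontal curve.

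More precisely, the step I would carry out in detail is a computation with the minimal resolution over the point at infinity determined by $E$: blow up repeatedly to resolve the indeterminacy of $P$ there, track how the strict transform of a generic fibre $F_c$ passes through the created components, and count horizontal components of the resulting divisor. Since $P$ has exactly one horizontal curve (Lemma 2), the resolution cannot produce a second component on which $p$ is non-constant; I would show that this happens only when the edge exponent satisfies $p=1$, because for $p\geq 2$ the resolution of $y^q=ax^p$ (a cusp-type singularity of the projectivized curve at that point of $z=0$) necessarily creates a configuration forcing an extra horizontal component or splitting the fibre. This reduces the problem to the elementary combinatorics of the Euclidean algorithm applied to $(p,q)$.

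The main obstacle I anticipate is making the bookkeeping of the blowing-up process precise enough: one must show that the "one horizontal curve" condition is genuinely violated for every pair $q\geq p\geq 2$ with $\gcd(p,q)=1$, not merely for small cases, and this requires a clean inductive description of how the continued-fraction expansion of $q/p$ controls the chain of exceptional curves and which of them are horizontal for $P$. A secondary subtlety is handling the two symmetric Puiseux forms uniformly, so that the conclusion "$P_E(x,y)=C(y^q-ax)^m$ with $q\geq 1$" (up to the symmetry exchanging the roles of $x$ and $y$) is obtained without case-splitting artifacts. Once $p=1$ is established, the stated form of $P_E$ follows immediately from Fact 1.
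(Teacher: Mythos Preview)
Your setup is right---resolve the indeterminacy of $P$ at the edge-point at infinity and read constraints from the dual graph via the continued fraction of $q/p$---but the mechanism you propose for excluding $p\ge2$ is not the one that actually bites. You suggest that $p\ge2$ would force a second horizontal component or split the generic fibre; neither happens. The resolution of a single branch with first Puiseux pair $(p,q)$, $\gcd(p,q)=1$, produces a linear chain of exceptional curves, and the strict transform of a generic fibre meets this chain along a single component whether $p=1$ or $p\ge2$. The one-horizontal-curve condition is already spent in deriving Fact~1 and does not by itself distinguish $p=1$ from $p\ge2$; likewise, resolving indeterminacy at infinity never alters the affine part, so no ``splitting'' of the fibre can arise this way.

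What you are missing is the use of rationality of the fibres together with minimality of $X$. By Lemma~1 every $C_s$ with $s\in\C$ is smooth irreducible rational, so $p:X\to\mathbb P^1$ is a $\mathbb P^1$-fibration and the reducible fibre $C_\infty$ can be blown down to a single $\mathbb P^1$. By Property~($*$) the proper transform $D_0$ of the line $z=0$ is the \emph{only} $(-1)$-component of $C_\infty$; hence the contraction must start at $D_0$, and for it to continue the successive neighbours of $D_0$ in the first Puiseux chain must all be $(-2)$-curves. Now compare with the explicit resolution of the germ at infinity formed by $z=0$ (self-intersection $+1$ in $\mathbb P^2$) and a branch $y=bx^{p/q}+\cdots$: the requirements $D_0^2=-1$ and $e_i^2=-2$ along that chain are satisfied precisely when $p=1$ (and then $q>1$ and the second arm has length $1$). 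The Euclidean-algorithm bookkeeping you anticipated is exactly what is needed, but it must be applied to the self-intersection numbers in the resolution chain, not to a count of horizontal components.
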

To see it, we need consider only the case $p/q\neq 1$. Assume, for
example, that $p/q <1$. Let $p:X\longrightarrow \mathbb{P}^1$ be a
regular extension over a minimal compactification $X$ of
$\mathbb{C}^2$, which results from a blowing-up process $\pi:
X\longrightarrow \P^2$ that blow up only at the indeterminacy
points of $P$ and of the obtained blowing-up versions of $P$. By
Lemma \ref{Smooth} the divisor at infinity $\DD$ is the union of
the fiber $C_\infty$ and the unique horizontal curve $D$,
$\DD=C_\infty \cup D$. The fiber $C_\infty$ contains the proper
transform $D_0$ of the line at infinity $z=0$ of $\C^2$. Since
$\deg P>1 $ and the morphism $p:X\longrightarrow \P^1$ is a
$\P^1$-fibration, the fiber $C_\infty$ is reducible and can be
contracted to one smooth rational curve by blowing down.
Furthermore, $D_0$ is the unique component of $C_\infty$ having
self-intersection $-1$. Then, one can see that the first Puiseux
chain of the dual graph of $\DD$ must be of the form
$$
\underset{-1}{\stackrel{D_0}{\bullet}}-\underset{-2}{\stackrel{e_1}{\circ}}-\cdots-
\underset{-2}{\stackrel{e_k}{\circ}}-\stackrel{|}{\underset{-2}\bigcirc}-{\stackrel{f_1}{\circ}}-\cdots-{\stackrel{f_l}{\circ}},
$$where the weights are the self-intersection numbers. This
Puiseux chain coincides with the resolution graph of the germ
curve at infinity $\gamma$, composed of the line $z=0$ and a
branch curve at infinity given by a Newton-Puiseux expansion of
the form
$$y=bx^{p/q}+\text{ lower terms in } x, \ b\neq 0.$$
Note that the line at infinity $z=0$ of $\C^2$ has
self-intersection $1$. Then,
 by examining in detail the resolving singularities of $\gamma$ we can
easily see that the self-intersection conditions $D_0^2=-1$ and
$e_i^2=-2$, $i=1,k$, are satisfied only when $p=1$, $q>1$ and
$l=1$.

\medskip Thus, once the fibres of $P$ are irreducible rational curves, Fact
2 enables us to easily construct polynomial automorphisms $\Phi$
of $\C^2$ such that $\deg P\circ \Phi(x,y)=1$.

\vskip0.3cm\noindent {\bf 4.} Let us to conclude here by a remark
that a non-zero constant Jacobian polynomial map $F=(P,Q)$ is
invertible if $P$ has only one horizontal curve. This is a key
point of the geometric proof of Theorem \ref{Razar} presented by
L\^e and Weber in \cite{LeWe2}. In fact, if $P$ has only one
horizontal curve, by the Jacobian condition in regular extensions
of $P$ the restriction of $Q$, viewed as a rational map, to the
unique horizontal curve of $P$ must be a constant mapping with
value $\infty$. So, the restriction of $Q$ to a generic fiber
$P=c$ is proper. Therefore, by the simply connectedness of $\C$
such a generic fiber $P=c$ is isomorphic to $\C$. Hence, $(P,Q)$
is invertible.

It seems to be very difficult to estimate the number of horizontal
curves of polynomial components of  non-zero constant Jacobian
polynomial maps of $\C^2$.

 \vskip 0.3cm{\it Acknowledgements.} The author would like to express his
thank to Pierete Cassou-Nogues, Mutsui Oka and Ha Huy Vui for many
valuable discussions and helps.

\end{document}